\documentclass[12pt]{amsart}
\usepackage{amssymb, amscd, hyperref}

\def\vp{\varphi}
\def\t{\theta}
\def\<{\langle}
\def\>{\rangle}
\def\irr{\mathrm{Irr}}
\def\ibr{\mathrm{IBr}}
\def\s{\sigma}
\def\tr{\mathrm{tr}}
\def\C{\mathbb{C}}
\def\R{\mathbb{R}}
\def\Q{\mathbb{Q}}
\def\gal{\mathrm{Gal}}
\def\l{\lambda}

\newtheorem{Lemma}             {Lemma}

\newtheorem{Proposition}[Lemma]{Proposition}

\newtheorem{Example}    [Lemma]{Example}
\newtheorem{Definition}    [Lemma]{Definition}
\newtheorem{Theorem}    [Lemma]{Theorem}

\newenvironment{Proof}{\textit{{\textbf{Proof.}}}}{\hfill\ensuremath{\blacksquare}\medskip}

\title[Gallagher's Theorem for Real-valued Brauer Characters]{Gallagher's Theorem for Real-valued Brauer Characters}
\author{Daryl Zane Adriano}

\address{Department of Mathematics and Statistics\\Maynooth University\\IRELAND}
\email{daryl.adriano.2022@mumail.ie}

\date{\today}
\begin{document}
	\maketitle
	
	\section*{Abstract}
	Let $G$ be a finite group, let $N$ be a normal subgroup of $G$, and let $\t$ be an irreducible $p$-Brauer character of $N$, for a prime $p$. It is well known that the number of irreducible Brauer characters of $G$ lying over $\t$ equals the number of $p$-regular $\t$-good conjugacy classes of $G_\t/N$.	
	
	In this note we express the number of real-valued irreducible Brauer characters of $G$ lying over $\t$ in terms of the $p$-regular $\t$-good conjugacy classes of $G_\t/N$. This is a modular analogue of one of the main results in a recent paper \cite{Mur23} by John Murray.
	\section*{Introduction}
	Throughout this paper, $G$ is a finite group, $p$ is a prime, and $\t$ is an irreducible Brauer character of a normal subgroup $N$ of $G$. As usual $G_\t$ denotes the inertia group of $\t$ in $G$. Recall that an irreducible Brauer character $\vp$ of $G$ is said to lie over $\t$ if $\t$ is a constituent of $\vp_{N}$.
	
	We recall what it means for a $p$-regular conjugacy class of $G_\t/N$ to be $\t$-good (cf. \cite{Gal70}). For any $p$-regular $x \in G_\t$, $\t$ extends to an irreducible Brauer character $\t_x$ of $N\< x \>$. Define the group $C_{G_\t}(Nx):= \{ g \in G_\t| [g,x] \in N\}$. Then $Nx \in G_\t/N$ is said to be $\t$-good if $\t_x^c = \t_x$ for all $c \in C_{G_\t}(Nx)$. 
	
	It can be shown that this does not depend on the choice of extension $\t_x$. Also $Nx \in G_\t/N$ is $\t$-good if and only if all of its $(G_\t/N)$-conjugates are $\t$-good. If $Nx$ is $\t$-good, we call the $(G_\t/N)$-conjugacy class of $Nx$ $\t$-good as well. The following is a special case of one of the main results in \cite{Isa88}:
	\medskip
	
	\noindent\textbf{Isaacs' Theorem.} \label{IT}\textit{The number of irreducible Brauer characters of $G$ lying over $\t$ is equal to the number of $\t$-good $p$-regular conjugacy classes of $G_\t/N$.}
	\medskip
	
	If $p$ does not divide the order of $G$, this reduces to a theorem by P.X. Gallagher (cf. \cite{Gal70}). This states that if $\t \in \irr(N)$ is an ordinary irreducible character of $N$, the number of irreducible characters of $G$ lying over $\t$ is equal to the number of $\t$-good conjugacy classes of $G_\t/N$.
	
	In this note, we count the number of real-valued irreducible Brauer characters of $G$ lying over $\t$. Now $G$ has no such Brauer character unless $\t$ is $G$-conjugate to $\overline{\t}$. So we assume from now on that $\t$ is $G$-conjugate to $\overline{\t}$.
	
	Before stating our main result, we need to define an indicator $\s$ on the $p$-regular $\t$-good conjugacy classes of $G_\t/N$. To start, we define the extended inertia group of $\t$ in $G$ to be the subgroup \medskip
	$$G_\t^* := \{g \in G| \t^g = \t \ \text{or} \ \overline{\t} \}.$$
	
	So $G_\t \leq G_\t^*$ and $[G_\t^*: G_\t] \leq 2$. Moreover $G_\t = G_\t^*$ if and only if $\t = \overline{\t}$. Throughout this paper, we fix $e \in G_\t^*$ to be any element that satisfies $\t^e = \overline{\t}$, and for any $Nx \in G_\t/N$, $\text{Cl}(Nx)$ will just denote the conjugacy class of $G_\t/N$ containing $Nx$ unless otherwise stated.
	
	\begin{Definition}
		Let $Nx \in G_\t/N$ be $\t$-good and $p$-regular, and let $\t_x$ be an arbitrary extension of $\t$ to $N\< x \>$. We assign an indicator $\s(Nx) \in \{0,+1,-1\}$ as follows. Suppose first that $\mathrm{Cl}(Nx) = \mathrm{Cl}(N(x^{-1})^{e^{-1}})$. Then there exists $\varepsilon \in G_\t^*$ such that $\t^\varepsilon = \overline{\t}$ and $Nx^{\varepsilon^{-1}} = Nx^{-1}$. Set $\s(Nx) = +1$, if $\t_x^\varepsilon = \overline{\t}_x$, and $\s(Nx) = -1$, if $\t_x^\varepsilon \neq \overline{\t}_x$. If $\mathrm{Cl}(Nx) \neq \mathrm{Cl}(N(x^{-1})^{e^{-1}})$ then no such $\varepsilon$ exists, and we set $\s(Nx) = 0$.
	\end{Definition}
	
	 This is analogous to the definitions established in \cite[\text{Section }1]{Mur23}. 
	
	Since we are letting $Nx \in G_\t/N$ be $\t$-good, it is straightforward to show that $\s(Nx)$ does not depend on the choice of $\varepsilon$, nor on the choice of extension $\t_x$. Additionally, $\s(Nx) = \s(Nx^g)$ for any $g \in G_\t$. So $\s$ is constant on conjugacy classes of $G_\t/N$. Thus we can refer to each $\t$-good conjugacy class of $G_\t/N$ as being of $\s_+$, $\s_-$ or $\s_0$ type. Our main result is:
	\newtheorem{theorema}{Theorem}
	\setcounter{theorema}{0}
	\renewcommand*{\thetheorema}{\Alph{theorema}}
	
	\begin{Theorem} \label{MT}
		Let $N \trianglelefteq G$ and let $\t \in \ibr(N)$. Then the number of real-valued irreducible Brauer characters of $G$ lying over $\t$ is equal to the number of $p$-regular $\t$-good conjugacy classes of $G_\t/N$ of $\s_+$ type minus the number of $p$-regular $\t$-good conjugacy classes of $G_\t/N$ of $\s_-$ type.
	\end{Theorem}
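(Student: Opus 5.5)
We reduce to the case $G=G_\t^*$ and then count real-valued characters as fixed points of an involution, using a Brauer permutation lemma. The count of real-valued $\vp\in\ibr(G)$ over $\t$ equals the trace of complex conjugation acting on the set $\ibr(G|\t)$. Isaacs' Theorem identifies this set in cardinality with the $\t$-good $p$-regular classes of $G_\t/N$, and we want a twisted version matching the action of conjugation with a signed action on classes.

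\textbf{Reduction to $G=G_\t^*$.} Clifford correspondence gives a bijection $\psi\mapsto\psi^G$ from $\ibr(G_\t|\t)$ onto $\ibr(G|\t)$, and likewise from $\ibr(G^*_\t|\t)$. If $\vp=\psi^G$ is real, then $\overline{\vp}=\overline{\psi}^G$ lies over $\t$. Since $\overline{\psi}$ lies over $\overline{\t}=\t^e$, we get $\overline{\vp}=(\overline{\psi}^{e^{-1}})^G$, with $\overline{\psi}^{e^{-1}}\in\ibr(G_\t|\t)$. Hence real-valued $\vp$ correspond exactly to those $\psi\in\ibr(G_\t|\t)$ with $\overline{\psi}^{e^{-1}}=\psi$. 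This map $\psi\mapsto\overline{\psi}^{e^{-1}}$ is a well-defined involution, because $e^2\in G_\t$. The same characterization holds for $G_\t^*$, so we may assume $G=G_\t^*$ and count the fixed points of the involution $\tau:\psi\mapsto \overline{\psi}^{e^{-1}}$ on $\ibr(G_\t|\t)$.

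\textbf{Twisted character-theoretic count.} Following Gallagher and Isaacs, we use the projective representation approach. Work over a suitable field, pass to a central extension, and choose a projective Brauer representation $\hat\t$ of $G_\t$ extending $\t$, with factor set $\alpha$ on $G_\t/N$. Then $\ibr(G_\t|\t)$ is in bijection with the irreducible $\alpha^{-1}$-projective $p$-Brauer characters of $\bar G=G_\t/N$. The span of these characters has a basis indexed by the $\alpha$-good ($=\t$-good) $p$-regular classes, namely the "class functions" $f_{K}$ supported on good classes. The involution $\tau$ transports to an involution on these projective characters, and it acts compatibly on the class-function side by $Nx\mapsto N(x^{-1})^{e^{-1}}$ up to a sign. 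We then apply the standard trace argument: the number of fixed points of $\tau$ on the irreducible basis equals $\mathrm{tr}(\tau)$ on the space of projective Brauer class functions. Computing the trace on the class basis gives exactly $\sum_{K}\s(K)$. A class with $\s=0$ is moved to a different class and contributes $0$. A class with $\s=\pm1$ is sent to $\pm$ itself, and the sign is precisely whether $\t_x^\varepsilon=\overline{\t}_x$, since the value of $\hat\t$ on $Nx$ is governed by the extension $\t_x$. For Brauer characters the irreducibles are linearly independent on $p$-regular classes, so the trace is basis-independent. Concretely, the matrix of values of the projective irreducible Brauer characters on good $p$-regular classes is square and invertible, and $\tau$ acts by a permutation matrix on rows and by a signed permutation matrix on columns. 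This is Brauer's permutation lemma in signed form, giving trace(permutation) $=$ trace(signed permutation).

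\textbf{Main obstacle.} The delicate step is verifying the column side: the action of $\tau$ on a value $\chi(Nx)$ of a projective character equals $\pm\chi'$ evaluated at the class of $N(x^{-1})^{e^{-1}}$, with the sign exactly $\s(Nx)$. Establishing this requires care with the factor set, because $\overline{\hat\t}^{e^{-1}}$ is another projective extension of $\t$ differing from $\hat\t$ by a function $\mu$ on $G_\t/N$. We must check that $\mu$ restricted to $\<Nx\>$ is a character of the cyclic group, and that its value at $Nx$ is exactly the ratio between $\t_x^\varepsilon$ and $\overline{\t}_x$. I would handle this by restricting to $N\<x\>$, where everything is an honest extension, and using $\t$-goodness to make the comparison independent of choices. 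This is the analogue of the computation in \cite{Mur23}, with Brauer characters replacing ordinary ones, and it uses Green/Fong's result that $p'$-cyclic extensions exist.
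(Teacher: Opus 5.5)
Your architecture matches the paper's. You reduce to $G=G_\t^*$, count fixed points of $\vp\mapsto\overline{\vp}^{e^{-1}}$ on $\ibr(G_\t|\t)$, and compare a row permutation $X$ with a column monomial matrix $Y$ on an invertible table of values at representatives of the $p$-regular $\t$-good classes, giving $\tr X=\tr Y$.

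The genuine gap is that you never compute the diagonal entries of $Y$ at self-paired classes. That computation is the entire content of the paper's Lemmas 3 and 4; you simply assert that each entry is $\pm1$, with sign decided by whether $\t_x^\varepsilon=\overline{\t}_x$. Your sketch of this step has two defects.

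First, as written it compares $\overline{\hat\t}^{e^{-1}}$ with $\hat\t$ using the fixed element $e$. But $e$ need not normalize $N\<x\>$, so on $N\<x\>$ this compares $\hat\t$ with its values on the conjugate subgroup $(N\<x\>)^{e}$, and the resulting $\mu$ need not restrict to a character of $\<Nx\>$. You must instead use, for each self-paired class, an element $\varepsilon$ with $\t^\varepsilon=\overline{\t}$ and $Nx^{\varepsilon^{-1}}=Nx^{-1}$. This is allowed, since $\varepsilon\in eG_\t$ induces the same involution on $\ibr(G_\t|\t)$. This $\varepsilon$ normalizes $N\<x\>$. Now rescale $\hat\t$ so that it affords $\t_x$ on $N\<x\>$; this conjugates $Y$ by a diagonal matrix and leaves its diagonal unchanged. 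The factor set is then trivial on $\<Nx\>$, and the diagonal entry becomes $\l_x(x)^{\pm1}$, where $\l_x$ is the linear character of $N\<x\>/N$ with $\t_x^\varepsilon=\l_x\overline{\t}_x$.

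Second, even then you only know the entry is a $p'$-root of unity different from $1$ when $\t_x^\varepsilon\neq\overline{\t}_x$. Nothing in your proposal shows it is then $-1$.

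The paper closes both points directly. Lemma 4 uses $\t$-goodness essentially, not merely for independence of choices. Since $\varepsilon^2\in C_{G_\t}(Nx)$, one gets $\t_x=\t_x^{\varepsilon^2}=\overline{\l}_x^{\,2}\t_x$, so $\l_x$ is real and $\l_x(x)=\s(Nx)$. Lemma 3 then transfers this from $\t_i$ to every $\vp\in\ibr(G_\t|\t)$ via Isaacs' Lemma 6.5: $\vp(n_ix_i)=\vp(x_i)\t_i(n_ix_i)/\t_i(x_i)$ with $n_ix_i=(x_i^{-1})^{\varepsilon^{-1}}$. This is where the choice $\t_i(x_i)\neq0$ matters.

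Your projective-character packaging replaces that transfer by the factorization $\vp=\hat\t\psi$. That is legitimate, but it rests on facts you quote without proof: that $\alpha$-regular equals $\t$-good, and that the projective Brauer character table on these classes is square and invertible.

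In your setting there is a cheap substitute for Lemma 4: $X^2=I$ forces $Y^2=I$, so diagonal entries at self-paired classes square to $1$.

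Finally, your justification that "the trace of $\tau$ is basis-independent" is not valid, since $\tau$ is conjugate-linear. Only the concrete matrix identity $XP=PY$ yields $\tr X=\tr Y$.
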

	When $p$ does not divide the order of $G$, this reduces to \cite[Theorem A]{Mur23}.
	
	We use a generalized version of Brauer's Permutation Lemma to prove Theorem \ref{MT}. In Section $1$, we introduce an invertible matrix $M$ that emerges from the proof of Isaacs' Theorem in \cite{Isa88}. The rows of $M$ will correspond to the irreducible Brauer characters of $G_\t$ lying over $\t$, while its columns correspond to the $p$-regular $\t$-good conjugacy classes of $G_\t/N$.
	
	In Section $2$, we will define the matrix $\overline{M^e}$ using the involution $\vp \mapsto \overline{\vp^e}$ on $\ibr(G_\t | \t)$. We will show that there exists a permutation matrix $X$ such that $\overline{M^e} = XM$, while there exists a monomial matrix $Y$ such that $\overline{M^e} = MY$. Since $XM = MY$, we get $X = MYM^{-1}$, and hence $\tr X = \tr (MYM^{-1}) = \tr Y$. 
	
	So to prove Theorem \ref{MT}, we will show in Section $2$ that $\tr X$ is equal to the number of real-valued irreducible Brauer characters of $G$ lying over $\t$, while $\tr Y$ is equal to the number of $p$-regular $\t$-good conjugacy classes of $G_\t/N$ of $\s_+$ type minus the number of $p$-regular $\t$-good conjugacy classes of $G_\t/N$ of $\s_-$ type.
	
	We will then look at examples and special cases in Section $3$. Finally, in Section 4, we will mention a generalization of Theorem \ref{MT}.  
	\section{Gallagher's Theorem for Brauer characters}
	Given any finite group $H \supseteq N$, we let $\ibr(H|\t)$ denote the set of irreducible Brauer characters of $H$ that lie over $\t$.
	
	By the Clifford correspondence, $|\ibr(G|\t)| = |\ibr(G_\t|\t)|$ (cf. \cite[\text{Theorem 8.9}]{Nav98}). So we focus on the inertial group $G_\t$.
	
	Isaacs proved a more general version of Gallagher's Theorem in \cite{Isa88}. We sketch Isaacs's proof of his theorem, restricted to Brauer characters, and define the matrix $M$ we need afterwards.
	
	\medskip
	
	\textbf{\textit{Sketch proof of $\ref{IT}{\text{Isaacs' Theorem}}$.}} Suppose $G_\t/N$ has $t$ $p$-regular conjugacy classes. By \cite[\text{Corollary 5.5}]{Isa88}, there is a choice $\{ x_1, \dots, x_t\}$ of $p$-regular elements in $G_\t$, and a choice of extensions $\t_i \in \ibr(N\< x_i \>)$ of $\t$, such that the following hold:
	\begin{itemize}
		\item $\{ Nx_1, \dots, Nx_t\}$ forms a complete system of representatives for the $p$-regular $\t$-good conjugacy classes in $G_\t/N$.
		\item $\t_i(x_i) \neq 0$ for all $i \in \{ 1, \dots, t\}$.
	\end{itemize}
	We set $T:=\{ x_1, \dots, x_t\}$. Theorem 6.6 of \cite{Isa88} states that restriction to $T$ defines a $\C$-vector space isomorphism from the $\C$-vector space  $\mathrm{cf}(G_\t^0|\t)$ spanned by $\ibr(G_\t|\t)$ to the space of all complex functions on $T$. \hfill\ensuremath{\blacksquare}\medskip
	
	After labeling the elements of $\ibr(G_\t|\t)$ as $\{ \vp_1, \dots, \vp_t\}$, \cite[\text{Theorem 6.6}]{Isa88} tells us that the $t \times t$ matrix \begin{equation} \label{Mat}	
		M := [\vp_i(x_j)]
	\end{equation} is invertible. We will use this matrix in the proof of Theorem $\ref{MT}$.
	
	One other result we need in the next section is \cite[\text{Lemma 6.5}]{Isa88}. This states that for any $\vp \in \ibr(G_\t|\t)$, we have
	\begin{equation} \label{Eq}
		\vp(nx_i) = \vp(x_i)\frac{\t_i(nx_i)}{\t_i(x_i)}
	\end{equation}
	for any $x_i \in X$ and $n \in N$ such that $nx_i \in G_\t^0$.
	\section{Real-valued Brauer characters over an irreducible Brauer character of a normal subgroup}
	Let $\ibr_{\R}(G)$ be the set of irreducible real-valued Brauer characters of $G$, and let $\ibr_{\R}(G|\t)$ be the set of irreducible real-valued Brauer characters of $G$ lying over $\t$. Recall our assumption that $\t$ is $G$-conjugate to $\overline{\t}$.
	
	The Clifford correspondence is the bijection $\ibr(G_\t|\t) \to \ibr(G|\t)$, given by induction. Then induction also defines a bijection $\ibr(G_\t^*|\t) \to \ibr(G|\t)$. We refer to this as the \textit{extended Clifford correspondence}. Since $\t$ is $G$-conjugate to $\overline{\t}$, complex conjugation acts on both $\ibr(G_\t^*|\t)$ and $\ibr(G|\t)$. So the extended Clifford correspondence restricts to a bijection $\ibr_\R(G_\t^*|\t) \to \ibr_\R(G|\t)$. For the rest of this section, we can and do assume that $G = G_\t^*$. So $[G:G_\t] \leq 2$, and $G = G_\t$ if and only if $\t = \overline{\t}$.
	
	Recall that we fixed $e \in G$ to be any element that satisfies $\t^e = \overline{\t}$. So if $\t = \overline{\t}$, then $e$ is any element of $G$, while if $\t \neq \overline{\t}$, $e$ is any element of $G - G_\t$.
	
	Before we start proving Theorem \ref{MT}, we observe that if $Nx \in G_\t/N$ is $\t$-good, then so is $N(x^{-1})^{e^{-1}}$. With this in mind, we impose an additional condition on the set $T$ of the previous section. 
	
	\begin{itemize}
		\item  If $i,j \in \{1, \dots, t\}$ are distinct and $\mathrm{Cl}(N(x_i^{-1})^{e^{-1}}) = \mathrm{Cl}(Nx_j)$, then by replacing $x_j$ with $(x_i^{-1})^{e^{-1}}$, and replacing $\t_j \in \ibr(N\< x_j \> | \t)$ with $\overline{\t}_i^{e^{-1}} \in \ibr(N\< (x_i^{-1})^{e^{-1}} \> | \t)$, if necessary, we may assume that $x_j=(x_i^{-1})^{e^{-1}}$ and $\t_j = \overline{\t}_i^{e^{-1}}$.
	\end{itemize}
	
	So throughout the following proof, the invertible matrix $M$ from $(\ref{Mat})$ will be in terms of the elements of a set $T$ that also satisfies the above condition. We will then define the matrix $\overline{M^e}$, and show that there exists a permutation matrix $X$ and a monomial matrix $Y$ such that $\overline{M^e} = XM = MY$.
	\medskip
	
	\textit{\textbf{Proof of Theorem \ref{MT}.}}
	Clearly $|\ibr_{\R}(G|\t)|$ equals the number of $\vp \in \ibr(G_\t|\t)$ such that $\vp = \overline{\vp^e}$. So we count those Brauer characters. Set $\overline{M^e}:=[\overline{\vp_i^e(x_j)}]$. Now $\overline{\vp^e}\in \ibr(G_\t|\t)$ for any $\vp \in \ibr(G_\t|\t)$. As each row of $M$ is also a row of $\overline{M^e}$, there is a unique $t \times t$ permutation matrix $X$ such that $XM = \overline{M^e}$. The number of rows of $M$ fixed by $X$ (given by $\tr X$) equals the number of elements $\vp \in \ibr(G_\t|\t)$ that satisfy $\vp = \overline{\vp^e}$. Hence $\tr X = |\ibr_{\R}(G|\t)|$. 
	
	Now we compute $M^{-1}\overline{M^e}$, which is our matrix $Y$. To do this, we relate the columns of $M$ and $\overline{M^e}$. Suppose $i\neq j$ and $\mathrm{Cl}(Nx_i) = \mathrm{Cl}(N(x_j^{-1})^{e^{-1}})$. Then the $i^\mathrm{th}$ column of $M$ is equal to the $j^\mathrm{th}$ column of $\overline{M^e}$ by our construction of $T$. We note that if, for some $x_i \in T$, $\mathrm{Cl}(Nx_i)= \mathrm{Cl}(N(x_i^{-1})^{e^{-1}})$, the $i^\mathrm{th}$ column of $M$ may not be equal to the $i^\mathrm{th}$ column of $\overline{M^e}$. To address this, we need a lemma:
	\begin{Lemma} \label{L3}
		If $x_i \in T$ satisfies $\mathrm{Cl}(Nx_i)= \mathrm{Cl}(N(x_i^{-1})^{e^{-1}})$, then $\vp(x_i) = \s(Nx_i)\overline{\vp^e(x_i)}$ for all $\vp \in \ibr(G_\t|\t)$.
	\end{Lemma}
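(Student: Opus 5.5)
The plan is to compute $\overline{\vp^e(x_i)}$ directly, rewrite it as a value of $\vp$ on an element of the coset $Nx_i$, and then use (\ref{Eq}) to compare it with $\vp(x_i)$. Write $x=x_i$ and $\t_x=\t_i$, so that $\t_x(x)\neq 0$. By hypothesis there is $\varepsilon\in G$ with $\t^\varepsilon=\overline{\t}$ and $x^{\varepsilon^{-1}}=nx^{-1}$ for some $n\in N$.

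First, $\vp^e=\vp^\varepsilon$ for $\vp\in\ibr(G_\t|\t)$. Indeed $\varepsilon e^{-1}\in G_\t$ (or, if $\t=\overline{\t}$, both $e$ and $\varepsilon$ lie in $G=G_\t$). Also $G_\t\trianglelefteq G$, and $\vp$ is a class function on $G_\t$. Since Brauer characters satisfy $\vp(y^{-1})=\overline{\vp(y)}$ for $p$-regular $y$, we get
$$\overline{\vp^e(x)}=\vp^\varepsilon(x^{-1})=\vp\bigl((x^{-1})^{\varepsilon^{-1}}\bigr)=\vp\bigl((nx^{-1})^{-1}\bigr)=\vp(mx),\qquad m:=xn^{-1}x^{-1}\in N.$$
Here $mx$ is $p$-regular, so (\ref{Eq}) applies and gives
$$\overline{\vp^e(x)}=\vp(x)\,\frac{\t_x(mx)}{\t_x(x)}.$$
It therefore suffices to show that $\t_x(mx)/\t_x(x)=\s(Nx)$; note that $\s(Nx)=\pm1$ is its own inverse.

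Next, both $\t_x^\varepsilon$ and $\overline{\t}_x$ are irreducible Brauer characters of $N\langle x\rangle$ extending $\overline{\t}$. They live on the same group because $x^\varepsilon\in Nx^{-1}$. Since $N\langle x\rangle/N$ is a cyclic $p'$-group, Gallagher's theorem for Brauer characters gives $\t_x^\varepsilon=\l\overline{\t}_x$ for a unique linear $\l$ of $N\langle x\rangle/N$. This $\l$ is determined by $\l(x)$, and $\s(Nx)=+1$ exactly when $\l=1$, i.e.\ $\l(x)=1$. Using $mx=(x^{-1})^{\varepsilon^{-1}}$, we compute
$$\t_x(mx)=\t_x^\varepsilon(x^{-1})=\l(x)^{-1}\,\overline{\t}_x(x^{-1})=\l(x)^{-1}\t_x(x),$$
so the ratio equals $\l(x)^{-1}$.

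The main step is to show $\l(x)=\pm1$, and this is where $\t$-goodness enters. The element $\varepsilon^2$ lies in $G_\t$ and maps $Nx$ to $Nx$, so $\varepsilon^2\in C_{G_\t}(Nx)$, and goodness gives $\t_x^{\varepsilon^2}=\t_x$. Apply $\varepsilon$ to the relation $\t_x^\varepsilon=\l\overline{\t}_x$ and use $\overline{\t}_x^{\,\varepsilon}=\overline{\t_x^\varepsilon}=\overline{\l}\,\t_x$. This yields
$$\t_x=\l^\varepsilon\,\overline{\l}\,\t_x.$$
Moreover $\l^\varepsilon(x)=\l(x^{\varepsilon^{-1}})=\l(x^{-1})=\overline{\l}(x)$. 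Evaluating at $x$, where $\t_x(x)\neq0$, gives $\overline{\l(x)}^{\,2}=1$, so $\l(x)=\pm1$. Consequently $\l(x)^{-1}=\l(x)$, and this equals $+1$ exactly when $\l=1$, i.e.\ exactly when $\s(Nx)=+1$. Hence the ratio is $\s(Nx)$, and $\vp(x_i)=\s(Nx_i)\overline{\vp^e(x_i)}$.

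The only delicate points are the bookkeeping of the conjugation conventions and checking that $\varepsilon^2$ really centralizes $Nx$ modulo $N$.
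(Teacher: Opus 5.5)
Your proof is correct and follows essentially the same route as the paper. The paper isolates your ``main step'' as Lemma~\ref{L4}, showing $\l_x=\overline{\l}_x$ from $\t_x=\t_x^{\varepsilon^2}$ so that $\s(Nx)=\l_x(x)$; it then obtains $\t_i(n_ix_i)/\t_i(x_i)=\s(Nx_i)$ and applies (\ref{Eq}) after noting that $(x_i^{-1})^{e^{-1}}$ is $G_\t$-conjugate to $(x_i^{-1})^{\varepsilon^{-1}}$, which is your observation $\vp^e=\vp^\varepsilon$. The only cosmetic difference is that you get $\l(x)^2=1$ by evaluating at $x$ with $\t_x(x)\neq 0$, whereas the paper uses uniqueness in Gallagher's correspondence to get $\overline{\l}_x^2=1$ as a character.
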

	Before we prove Lemma $\ref{L3}$, recall our indicator $\s$ for $\t$-good elements of $G_{\t}/N$ that we defined above. We need one additional result: 
	\begin{Lemma} \label{L4}
		Let $Nx \in G_\t/N$ be $\t$-good, and suppose $\mathrm{Cl}(Nx) = \mathrm{Cl}(N(x^{-1})^{e^{-1}})$. Then for any $\varepsilon \in G$ satisfying $Nx^{\varepsilon^{-1}} = Nx^{-1}$ and $\t^\varepsilon = \overline{\t}$, we have $\t_x^\varepsilon(x) = \s(Nx)\overline{\t}_x(x)$.
	\end{Lemma}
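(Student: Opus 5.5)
The plan is to compare the two extensions $\t_x^\varepsilon$ and $\overline{\t}_x$ of $\overline{\t}$ to the same group $N\<x\>$ via Gallagher's correspondence for Brauer characters. They will differ by a linear character $\l$ of the cyclic $p'$-group $N\<x\>/N$. I will then use $\t$-goodness to show that $\l^2=1$.

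First, the groups match. The hypothesis $Nx^{\varepsilon^{-1}}=Nx^{-1}$ gives $x\in N(x^\varepsilon)^{-1}$, so $N\<x^\varepsilon\>=N\<x\>$. Hence $\t_x^\varepsilon$, which lives on $(N\<x\>)^\varepsilon=N\<x^\varepsilon\>$, is an irreducible Brauer character of $N\<x\>$. Its restriction to $N$ is $\t^\varepsilon=\overline{\t}$. The character $\overline{\t}_x$ is also an extension of $\overline{\t}$ to $N\<x\>$. Since $N\<x\>/N$ is a cyclic $p'$-group, the Brauer version of Gallagher's theorem (cf. \cite[Theorem 8.20]{Nav98}) applies. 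It says that $\mu\mapsto\mu\overline{\t}_x$ is a bijection from the linear Brauer characters of $N\<x\>/N$ onto the extensions of $\overline{\t}$ to $N\<x\>$. So $\t_x^\varepsilon=\l\overline{\t}_x$ for a unique such $\l$. By the definition of $\s$, we have $\s(Nx)=+1$ exactly when $\l=1$.

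Next I show $\l^2=1$ by conjugating by $\varepsilon$ a second time. On one hand,
$$\t_x^{\varepsilon^2}=(\l\overline{\t}_x)^\varepsilon=\l^\varepsilon\,\overline{\t_x^\varepsilon}=\l^\varepsilon\,\overline{\l}\,\t_x .$$
On the other hand, $\l^\varepsilon(x)=\l(x^{\varepsilon^{-1}})=\l(x^{-1})=\overline{\l}(x)$, because $\l$ is trivial on $N$. Since $Nx$ generates $N\<x\>/N$, this gives $\l^\varepsilon=\overline{\l}$, and therefore $\t_x^{\varepsilon^2}=\overline{\l}^2\t_x$.

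Now $\varepsilon^2\in G_\t$, because $\t^{\varepsilon^2}=\overline{\t}^{\,\varepsilon}=\overline{\t^\varepsilon}=\t$. Also $x^{\varepsilon^{-2}}\in Nx$, so $\varepsilon^2\in C_{G_\t}(Nx)$. Since $Nx$ is $\t$-good, $\t_x^{\varepsilon^2}=\t_x$. By the uniqueness part of Gallagher's correspondence, $\overline{\l}^2=1$, so $\l(x)=\pm1$.

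It remains to determine $\l(x)$. If $\s(Nx)=+1$ then $\l=1$ and $\l(x)=1$. If $\s(Nx)=-1$ then $\l\neq1$. Since $\l$ is determined by its value on the generator $Nx$, this forces $\l(x)=-1$. In both cases
$$\t_x^\varepsilon(x)=\l(x)\overline{\t}_x(x)=\s(Nx)\overline{\t}_x(x),$$
which is the claim.

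The main point needing care is the appeal to Gallagher's theorem for Brauer characters, both for existence and for uniqueness of $\l$. Uniqueness is what turns $\overline{\l}^2\t_x=\t_x$ into $\l^2=1$. It holds for $p'$-quotients such as the cyclic $p'$-group $N\<x\>/N$, but it would fail without the $p$-regularity of $x$.
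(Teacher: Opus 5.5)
Your proposal is correct and follows the paper's proof. You write $\t_x^\varepsilon=\l\overline{\t}_x$ via Gallagher, conjugate once more by $\varepsilon$ (using $\varepsilon^2\in C_{G_\t}(Nx)$ and $\t$-goodness) to get $\overline{\l}^2=1$, and read off $\s(Nx)=\l(x)$, making explicit steps the paper leaves implicit such as $N\<x^\varepsilon\>=N\<x\>$ and $\l^\varepsilon=\overline{\l}$. One small correction to your closing remark: uniqueness in Gallagher's correspondence for Brauer characters does not require $N\<x\>/N$ to be a $p'$-group; $p$-regularity of $x$ is needed only so that $\l(x)$ and $\t_x(x)$ are defined and $\l$ is determined by $\l(x)$.
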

	
	\textit{\textbf{Proof of Lemma \ref{L4}.}} Since $\t_x^\varepsilon$ and $\overline{\t}_x$ lie above $\overline{\t}$, there exists $\l_x \in \ibr(N\< x\>/N)$ such that $\t_x^\varepsilon = \l_x\overline{\t}_x$. Note that $\l_x$ does not depend on the choice of $\t_x$ and $\varepsilon$. Since $\varepsilon^2 \in C_G(Nx)$, $Nx$ is $\t$-good, $Nx^{\varepsilon^{-1}} = Nx^{-1}$ and $\overline{\t_x^\varepsilon} = \overline{\t}_x^\varepsilon$, we have
	$$\t_x = \t_x^{\varepsilon^2} = (\l_x \overline{\t}_x)^\varepsilon = \l_x^\varepsilon \overline{\t}_x^\varepsilon= \overline{\l_x} \overline{\t_x^\varepsilon}= \overline{\l}_x \left(\overline{\l_x \overline{\t}_x}\right)=\overline{\l}_x^2 \t_x.$$ 
	So $\overline{\l}_x^2$ is trivial, whence $\l_x = \overline{\l}_x$. Then $\t_x^\varepsilon = \overline{\t}_x$ if $\l_x(x) = 1$ and $\t_x^\varepsilon \neq \overline{\t}_x$ if $\l_x(x) = -1$. In either case, note that $\s(Nx) = \l_x(x)$. So $\t_x^\varepsilon(x) = \s(Nx)\overline{\t}_x(x)$, as claimed. \hfill\ensuremath{\blacksquare}\medskip
	
	\textit{\textbf{Proof of Lemma \ref{L3}.}} Since we assumed that $\mathrm{Cl}(Nx_i)= \mathrm{Cl}(N(x_i^{-1})^{e^{-1}})$, there exists an $\varepsilon \in G$ that satisfies $Nx_i^{\varepsilon^{-1}} = Nx_i^{-1}$ and $\t^\varepsilon = \overline{\t}$. Thus $\s(Nx_i)\in \{+1,-1\}$.  Now $(x_i^{-1})^{\varepsilon^{-1}} = n_ix_i$ for some $n_i \in N$. So $\t_i^\varepsilon(x_i^{-1}) = \t_i(n_ix_i) = \s(Nx_i)\overline{\t_i(x_i^{-1})} = \s(Nx_i)\t_i(x_i)$. Since none of these are zero, we deduce that $\frac{\t_i(n_ix_i)}{\t_i(x_i)} = \s(Nx_i)$.
	
	Now $(x_i^{-1})^{e^{-1}}$ is $G_{\t}$-conjugate to $(x_i^{-1})^{\varepsilon^{-1}} = n_ix_i$. So for $\vp \in \ibr(G_\t|\t)$, we have $\vp^e(x_i^{-1}) = \vp(n_ix_i) = \vp(x_i)\frac{\t_i(n_ix_i)}{\t_i(x_i)} = \s(Nx_i)\vp(x_i)$. Here the second equality is obtained from equation $(\ref{Eq})$. We conclude that $\vp(x_i) = \s(Nx_i)\overline{\vp^e(x_i)}$, as desired. \hfill\ensuremath{\blacksquare}\medskip

	Lemma $\ref{L3}$ tells us that if $\mathrm{Cl}(Nx_i)= \mathrm{Cl}(N(x_i^{-1})^{e^{-1}})$ and $Nx_i$ has $\s_+$ type, then the $i^\mathrm{th}$ columns of $M$ and $\overline{M^e}$ are equal, and if $Nx_i$ has $\s_-$ type, then the $i^\mathrm{th}$ columns of $M$ and $\overline{M^e}$ differ by a scalar factor of $-1$.
	
	We can now fully describe $M^{-1}\overline{M^e} = Y$. By our previous observations, $Y$ has the following properties:
	\begin{itemize}
		\item If  $\mathrm{Cl}(Nx_i)= \mathrm{Cl}(N(x_i^{-1})^{e^{-1}})$, and $\s(Nx_i)=+1$, then $Y_{ii} = 1$ and $Y_{ij} = Y_{ji} = 0$ for any $j \neq i$.
		\item If $\mathrm{Cl}(Nx_i)= \mathrm{Cl}(N(x_i^{-1})^{e^{-1}})$, and $\s(Nx_i)=-1$, then $Y_{ii} = -1$ and $Y_{ij} = Y_{ji} = 0$ for any $j \neq i$.
		\item If $i \neq j$ and $\mathrm{Cl}(Nx_i) = \mathrm{Cl}(N(x_j^{-1})^{e^{-1}})$, then $Y_{ij} = Y_{ji} = 1$ and $Y_{kj} = Y_{jk} = 0$ for any $k \neq i$.
	\end{itemize}
	
	Note that $Y$ is a monomial matrix. As a consequence of the above properties, $\tr Y$ is equal to the number of $p$-regular $\t$-good conjugacy classes of $G_\t/N$ of $\s_+$ type minus the number of $p$-regular $\t$-good conjugacy classes of $G_\t/N$ of $\s_-$ type. Since $\overline{M^e} = XM = MY$, we have $\tr X = \tr (MYM^{-1}) = \tr Y$. Recall that $\tr X$ is equal to the number of Brauer characters $\vp \in \ibr(G_\t|\t)$ that satisfies $\vp = \overline{\vp^e}$, which is also equal to $|\ibr_{\R}(G|\t)|$. Now our proof of Theorem \ref{MT} is complete. \hfill\ensuremath{\blacksquare}

	\section{Special cases and Examples}
 
	\subsection*{Case (i): $\t = \overline{\t}$}
	
	We would have $G_\t^* = G_\t$, and $e \in G_\t$. Hence the number of real-valued irreducible Brauer characters of $G$ lying over $\t$ is equal to the number of real $p$-regular $\t$-good conjugacy classes of $G_\t/N$ of $\s_+$ type minus the number of real $p$-regular $\t$-good conjugacy classes of $G_\t/N$ of $\s_-$ type. 
	
	As a consequence of Lemma \ref{L3}, if $Nx_i$ is real in $G_\t/N$ and $\s(Nx_i) = +1$, then $\vp(x_i) = \overline{\vp(x_i)}$ for all $\vp \in \ibr(G_\t|\t)$, and hence the $i^\mathrm{th}$ column of $M$ from $(\ref{Mat})$ is real-valued. If $Nx_i$ is real in $G_\t/N$ and $\s(Nx_i) = -1$, then $\vp(x_i) = -\overline{\vp(x_i)}$ for all $\vp \in \ibr(G_\t|\t)$, and hence the $i^\mathrm{th}$ column of $M$ from $(\ref{Mat})$ is purely imaginary.
	
	\begin{Example}
		Let $G = GL(2,3)$ and $N = Q_8$. Let $p$ be any prime that does not divide $|G|$, and let $\t$ be the unique irreducible character of $Q_8$ of degree $2$. Then the elements of $\ibr(G|\t)$ (which equals $\irr(G|\t)$) have character values: \medskip
		
		\begin{tabular}{c|cccccccc}
			& $1a$ & $2a$ & $2b$ & $3a$ & $4a$ & $6a$ & $8a$ & $8b$\\ \hline
			$\vp_1$ & $2$  & $-2$ & $0$  & $-1$  & $0$  & $1$ & $-\sqrt{2}i$ & $\sqrt{2}i$ \\
			$\vp_2$ & $2$  & $-2$ & $0$  & $-1$  & $0$  & $1$  & $\sqrt{2}i$ & $-\sqrt{2}i$\\
			$\vp_3$ & $4$ & $-4$ & $0$  & $1$  & $0$  & $-1$  & $0$ & $0$ 
		\end{tabular}
		\medskip
		
		In this case $G/N \cong S_3$. There is one real-valued irreducible character of $G$ lying above $\t$. In comparison, the trivial conjugacy class of $G/N$ and the unique conjugacy class of $G/N$ containing elements of order $3$ are the two $\s_+$ type conjugacy classes, while the unique conjugacy class of $G/N$ consisting of elements of order $2$ is the only $\s_-$ type conjugacy class.
		
		We can choose $x_1, x_2$ and $x_3$ respectively to be any element from the union of conjugacy classes $K_{1a}\cup K_{2a}$, $K_{3a}\cup K_{6a}$ and $K_{8a}\cup K_{8b}$. We can see that in any case, $x_1$ and $x_2$ are of $\s_+$ type, and $\vp(x_1)$ and $\vp(x_2)$ are real for any $\vp \in \irr(G|\t)$, while $x_3$ is of $\s_-$ type and $\vp(x_3)$ is purely imaginary for any $\vp \in \irr(G|\t)$.
	\end{Example}
	
	\subsection*{Case (ii): $p=2$}
	We have the following result:
	\begin{Proposition}
		Let $p = 2$, $\t \in \ibr(N)$ is an irreducible $2$-Brauer character, and assume that $G= G_\t^*$. Let $e \in G_\t^*$ satisfy $\t^e = \overline{\t}$. Let $T = \{ x_1, \dots, x_t\}$ be defined like before. Then for any $x_i \in T$ that satisfies $\mathrm{Cl}(Nx_i)= \mathrm{Cl}(N(x_i^{-1})^{e^{-1}})$, $\s(Nx_i) = +1$. Thus every $2$-regular $\t$-good conjugacy class of $G_\t/N$ is either of $\s_+$ type or of $\s_0$ type.
	\end{Proposition}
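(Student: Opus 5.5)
The key input is already in the proof of Lemma \ref{L4}: for $Nx$ $\t$-good with $\mathrm{Cl}(Nx)=\mathrm{Cl}(N(x^{-1})^{e^{-1}})$ and $\varepsilon$ as there, we have $\t_x^\varepsilon=\l_x\overline{\t}_x$ with $\l_x\in\ibr(N\<x\>/N)$ and $\l_x=\overline{\l}_x$, i.e.\ $\l_x^2=1$. We also showed $\s(Nx)=\l_x(x)\in\{\pm1\}$. So I will prove that $\l_x$ is trivial when $p=2$. Then $\s(Nx_i)=+1$ for every $x_i\in T$ with $\mathrm{Cl}(Nx_i)=\mathrm{Cl}(N(x_i^{-1})^{e^{-1}})$.

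Since $N\<x\>/N$ is cyclic, $\l_x$ is a linear Brauer character, i.e.\ a homomorphism $N\<x\>/N\to\C^\times$. Its values are roots of unity of $p'$-order, because Brauer characters are defined only on $p$-regular elements and linear Brauer characters of a cyclic group correspond to linear characters of its $p'$-part. Here $x$ is $2$-regular, so $Nx$ has odd order in $G_\t/N$ and $N\<x\>/N$ has odd order. Hence $\l_x(x)$ is a root of unity of odd order. Together with $\l_x(x)^2=1$ this forces $\l_x(x)=1$. Even without the cyclic-order argument, a $2$-modular linear Brauer character takes values in odd-order roots of unity, and $-1$ is not among them.

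Thus $\s(Nx_i)=\l_{x_i}(x_i)=1$. The final sentence follows directly: every class is either self-paired under $Nx\mapsto N(x^{-1})^{e^{-1}}$, and so of $\s_+$ type, or not self-paired, and so of $\s_0$ type.

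The only point needing care is that $\l_x$ really is an ordinary homomorphism whose values have odd order. To check this I would use the fact that $\l_x$ is determined by its value at the generator $Nx$ of $N\<x\>/N$, which is a group of odd order.
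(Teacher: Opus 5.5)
Your proof is correct and follows essentially the same route as the paper: both reuse the computation from the proof of Lemma \ref{L4} to get $\l_x^2=1$. Both then note that $N\<x\>/N$ is a cyclic group of odd order, because $x$ is $2$-regular, so $\l_x$ is trivial and $\s(Nx)=+1$. The paper states the last step as triviality of $\l_x$ itself rather than of the value $\l_x(x)$, but these are equivalent since $Nx$ generates the quotient.
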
  
	\begin{Proof}
		Let $Nx_i$ satisfy $\mathrm{Cl}(Nx_i)= \mathrm{Cl}(N(x_i^{-1})^{e^{-1}})$. Then there exists an $\varepsilon \in G_\t^*$ satisfying $Nx_i^{\varepsilon^{-1}} = Nx_i^{-1}$ and $\t^\varepsilon = \overline{\t}$. Let $\t_i \in \ibr(N\langle x_i\rangle)$ be an extension of $\t$. Then $\t_i^\varepsilon = \l_i \overline{\t}_i$ for some $\l_i \in \ibr(N\langle x_i\rangle/N)$. Since $\varepsilon^2 \in C_G(Nx)$, $Nx_i$ is $\t$-good, $Nx_i^{\varepsilon^{-1}} = Nx_i^{-1}$ and $\overline{\t_i^\varepsilon} = \overline{\t}_i^\varepsilon$, we have
		$$\t_i = \t_i^{\varepsilon^2} = (\l_i \overline{\t}_i)^\varepsilon = \l_i^\varepsilon \overline{\t}_i^\varepsilon= \overline{\l_i} \overline{\t_i^\varepsilon}= \overline{\l}_i \left(\overline{\l_i \overline{\t}_i}\right)=\overline{\l}_i^2 \t_x.$$ 
		So $\overline{\l}_i^2$ is trivial, hence $\l_i = \overline{\l}_i$. But since $Nx_i$ is $p$-regular, $N\langle x_i\rangle/N$ is a $p'$-group, which means $\l_i$ is trivial. Hence $\t_i^\varepsilon = \overline{\t}_i$, and $\s(Nx_i) = +1$.
	\end{Proof}
	
	Additionally, in the case where both $\t = \overline{\t}$ and $p=2$, every real $2$-regular conjugacy class of $G_\t/N$ is $\t$-good (this is one consequence of \cite[\text{Theorem 2}]{GM21}). So in this case the number of real valued irreducible Brauer characters of $G$ lying over $\t$ is equal to the number of real $2$-regular conjugacy classes of $G_\t/N$.
	
	\section{Generalizing Theorem \ref{MT}}
	
	Theorem \ref{MT} can be generalized in the following way. Let $N \trianglelefteq G$, $\t \in \ibr(N)$, and $\mathcal{G} = \gal(\Q_{|G|}/\Q_{|G|_p})$. We let $\mathcal{H}$ be the subgroup of $\mathcal{G}$ consisting of elements $\mu$ such that, for some non-negative integer $f$, $\mu(\xi) = \xi^{p^f}$ for every $p'$-root of unity $\xi$ in $\Q_{|G|}$ (cf. \cite{Nav04}). 
	
	Now let $\mu \in \mathcal{H}$ be an involution. We can then define the $\mu$-extended inertia group of $\t$ in $G$ to be the subgroup
	$$G_\t^\mu := \{ g \in G| \t^g = \t \text{ or } \t^\mu\}$$
	So $G_\t \leq G_\t^\mu$ and $[G_\t^\mu: G_\t] \leq 2$. Moreover $G_\t = G_\t^\mu$ if and only if $\t = \t^\mu$. Fix $e \in G_\t^\mu$ to be any element that satisfies $\t^e = \t^\mu$.
	
	Now let $Nx \in G_\t/N$ be $\t$-good and $p$-regular, and let $\t_x$ be an arbitrary extension of $\t$ to $N\< x \>$. We assign an indicator $\s^\mu(Nx) \in \{0,+1,-1\}$ as follows. Suppose first that $\text{Cl}(Nx) = \text{Cl}(N(x^\mu)^{e^{-1}})$. Then there exists $\varepsilon \in G_\t^*$ such that $\t^\varepsilon = \t^\mu$ and $Nx^{\varepsilon^{-1}} = Nx^\mu$. Set $\s^\mu(Nx) = +1$, if $\t_x^\varepsilon = \t_x^\mu$, and $\s^\mu(Nx) = -1$, if $\t_x^\varepsilon \neq \t_x^\mu$. If $\text{Cl}(Nx) \neq \text{Cl}(N(x^{\mu})^{e^{-1}})$ then no such $\varepsilon$ exists, and we set $\s^\mu(Nx) = 0$. 
	
	It can be shown that $\s^\mu(Nx)$ does not depend on the choice of $\varepsilon$, nor on the choice of extension $\t_x$. Additionally, $\s^\mu(Nx) = \s^\mu(Nx^g)$ for any $g \in G_\t$. So $\s^\mu$ is constant on conjugacy classes of $G_\t/N$. Thus we can refer to each $\t$-good conjugacy class of $G_\t/N$ as being of $\s^\mu_+$, $\s^\mu_-$ or $\s^\mu_0$ type.
	
	We define $\ibr_\mu(G|\t) := \{ \vp \in \ibr(G|\t)| \vp = \vp^\mu\}$. Now this set is empty unless $\t$ is $G$-conjugate to $\t^\mu$, so we may assume that $\t$ is $G$-conjugate to $\t^\mu$. Then $\mu$ has an equivalent version of Theorem \ref{MT}:
	
	\begin{Theorem}
		Let $N \trianglelefteq G$ and let $\t \in \ibr(N)$. Then $|\ibr_\mu(G|\t)|$ is equal to the number of $p$-regular $\t$-good conjugacy classes of $G_\t/N$ of $\s^\mu_+$ type minus the number of $p$-regular $\t$-good conjugacy classes of $G_\t/N$ of $\s^\mu_-$ type.
	\end{Theorem}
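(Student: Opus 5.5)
We follow the proof of Theorem \ref{MT} step by step, with complex conjugation replaced by the Galois involution $\mu$. For $p$-regular $g$, Brauer character values are sums of $p'$-roots of unity, so $\mu$ acts on them. Since $\mu(\xi)=\xi^{p^f}$ on $p'$-roots of unity and $\mu^2=1$, we have $\vp^\mu(g)=\vp(g^{p^f})$. Write $x^\mu$ for the element $x^{p^f}$; this depends only on $x$ modulo its order, and it is the element used in the definition of $\s^\mu$. The set $\ibr(G|\t)$ is stable under $\mu$, because $\t$ is $G$-conjugate to $\t^\mu$. The extended Clifford correspondence (induction from $G_\t^\mu$) commutes with $\mu$, so it restricts to a bijection $\ibr_\mu(G_\t^\mu|\t)\to\ibr_\mu(G|\t)$. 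Hence we may assume $G=G_\t^\mu$. Then $\vp\mapsto(\vp^e)^\mu$ is an involution on $\ibr(G_\t|\t)$, and $|\ibr_\mu(G|\t)|$ equals the number of its fixed points.

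Next we choose $T=\{x_1,\dots,x_t\}$ as in Isaacs' construction, with an extra condition. Whenever $\mathrm{Cl}(Nx_j)=\mathrm{Cl}(N(x_i^\mu)^{e^{-1}})$ with $i\neq j$, we take $x_j=(x_i^\mu)^{e^{-1}}$ and $\t_j=(\t_i^\mu)^{e^{-1}}$. This is legitimate for three reasons. The $\t$-goodness of classes is preserved by the map $Nx\mapsto N(x^\mu)^{e^{-1}}$. This map is an involution on classes, since $\mu^2=1$ and $e^2\in G_\t$. Finally, $\t_j(x_j)=\mu(\t_i(x_i))\neq 0$. Set $M=[\vp_i(x_j)]$ and $M'=[(\vp_i^e)^\mu(x_j)]$. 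Because $M'$ is a row permutation of $M$, we get $M'=XM$, and $\tr X=|\ibr_\mu(G|\t)|$.

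For the columns we need the analogue of Lemma \ref{L4}: if $\varepsilon$ satisfies $\t^\varepsilon=\t^\mu$ and $Nx^{\varepsilon^{-1}}=Nx^\mu$, then $\t_x^\varepsilon(x)=\s^\mu(Nx)\,\t_x^\mu(x)$. This is the step I expect to be the main obstacle. Write $\t_x^\varepsilon=\l_x\t_x^\mu$ with $\l_x\in\ibr(N\<x\>/N)$. We have $\varepsilon^2\in C_{G_\t}(Nx)$, because $(x^\mu)^\mu\equiv x$ modulo $N$. Using $\t$-goodness and the fact that $\mu$ commutes with conjugation, we get
$$\t_x=\t_x^{\varepsilon^2}=(\l_x\t_x^\mu)^\varepsilon=\l_x^\varepsilon\,\l_x^\mu\,\t_x.$$
Now $\l_x^\varepsilon(x)=\l_x(x^\mu)=\l_x^\mu(x)$. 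So $\l_x^\mu(x)^2=1$. Applying $\mu$ gives $\l_x(x)=\pm1$, and we conclude $\s^\mu(Nx)=\l_x(x)$. The care needed is in checking which identities $\l_x^\varepsilon=\l_x^\mu$ hold on the cyclic group $N\<x\>/N$ and how $\mu$ interacts with restriction of Brauer characters.

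Then the analogue of Lemma \ref{L3} follows as in the paper. Write $(x_i^\mu)^{\varepsilon^{-1}}=n_ix_i$. Then $\t_i(n_ix_i)=\t_i^\varepsilon(x_i^\mu)=\s^\mu(Nx_i)\,\t_i^\mu(x_i^\mu)$. Since $\mu^2=1$, this equals $\s^\mu(Nx_i)\,\t_i(x_i)$. Equation (\ref{Eq}) now gives $(\vp^e)^\mu(x_i)=\s^\mu(Nx_i)\vp(x_i)$. Consequently $M'=MY$, where $Y$ is monomial. $Y$ has diagonal entries $\s^\mu(Nx_i)$ on the self-paired classes and swaps the columns of each pair of distinct classes, so $\tr Y$ is the number of classes of $\s^\mu_+$ type minus the number of classes of $\s^\mu_-$ type. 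Finally, $\tr X=\tr(MYM^{-1})=\tr Y$.
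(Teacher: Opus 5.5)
Your proposal is correct and is exactly the adaptation of the proof of Theorem A that the paper has in mind; the paper omits this proof as similar. You replace complex conjugation by $\mu$ and $x^{-1}$ by $x^{p^f}$, use $\l_x^\varepsilon=\l_x^\mu$ in place of $\l_x^\varepsilon=\overline{\l}_x$, and finish with $\tr X=\tr Y$.

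Two harmless points. First, with your choices one actually gets $\t_j(x_j)=\t_i^\mu(x_i^\mu)=\t_i(x_i)$ rather than $\mu(\t_i(x_i))$, which does not affect nonvanishing. Second, when you apply the Lemma \ref{L4} analogue at $x_i^\mu$ instead of $x_i$, you are implicitly using $\l_x(x^\mu)=\mu(\l_x(x))=\l_x(x)$, which holds because $\l_x(x)=\pm1$.
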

	
	We omit the proof of this more general theorem, since it can be proven in a similar way to Theorem \ref{MT}.
	\section*{Acknowledgements}
	This paper was written with the support of the John \& Pat Hume Doctoral Awards financed by Maynooth University. I also acknowledge the encouragement and advice of my PhD advisor John Murray.
	
\end{document}